\documentclass[reqno,a4paper,11pt]{amsart}

\usepackage[a4paper, margin=1in]{geometry}
\usepackage{colonequals}
\usepackage{microtype}
\usepackage[dvipsnames]{xcolor}
\usepackage{amssymb}
\usepackage{mathtools}
\usepackage{commath}
\usepackage{amsthm}
\usepackage{graphicx}

\theoremstyle{plain}
\newtheorem{theorem}{Theorem}[section]
\newtheorem{lemma}[theorem]{Lemma}
\newtheorem{proposition}[theorem]{Proposition}

\theoremstyle{definition}

\newtheorem*{note}{Note}

\theoremstyle{remark}
\newtheorem{example}{Example}

\newcommand{\C}{\mathbb{C}}
\newcommand{\Z}{\mathbb{Z}}
\newcommand{\Q}{\mathbb{Q}}
\newcommand{\N}{\mathbb{N}}
\newcommand{\R}{\mathbb{R}}
\newcommand{\E}{\mathbb{E}}
\newcommand{\Qbar}{\overline{\Q}}

\usepackage[pdfborderstyle={/S/U/W 0},unicode]{hyperref}
\hypersetup{
    colorlinks=true,
    linkcolor=blue!85!black,
    citecolor=blue!85!black,
    urlcolor=blue!85!black,
}

\numberwithin{equation}{section}

\usepackage{silence}




\renewcommand{\le}{\leqslant}
\renewcommand{\ge}{\geqslant}

\begin{document}

\title[Congruent copies of finite patterns in planar point sets]{Congruent copies of finite patterns in planar point sets}

\author[S. Bhattacharya]{Shubhrajit Bhattacharya}
\address{Department of Mathematics, The University of Chicago, Chicago, Illinois, USA}
\email{shubhrajit@uchicago.edu}

\author[R. Goenka]{Ritesh Goenka}
\address{Mathematical Institute, University of Oxford, Oxford, United Kingdom}
\email{ritesh.goenka@maths.ox.ac.uk}

\begin{abstract}
    Given a finite nonempty planar point set $S$, what is the maximum number of congruent copies of $S$ contained in a set of $n$ points in the Euclidean plane? Building on OpenAI's recent breakthrough on the unit distance problem, we construct planar sets consisting of $n$ points that contain $\Omega_S(n^{1+\delta_S})$ congruent copies of $S$, for some positive constant $\delta_S$ depending only on $S$. This answers a question of Brass and Pach in a strong form, and makes progress on questions posed by Erd\H{o}s and Purdy, and \'Abrego and Fern\'andez-Merchant. Our proof uses the number field construction from Sawin's quantitative refinement of OpenAI's result and consequently yields an explicit choice for $\delta_S$ for each fixed $S$.
\end{abstract}

\subjclass[2020]{Primary: 52C10; Secondary: 52C35}
\keywords{unit distance problem, unit equilateral triangles, congruent copies, geometric patterns}

\maketitle

\section{Introduction}
\label{sec:intro}

A wide range of problems in discrete geometry can be cast as problems about occurrences of certain geometric patterns in finite point sets. As in \cite{BP}, a \emph{geometric pattern} refers to an equivalence class of point sets in the $d$-dimensional Euclidean space under some fixed geometrically defined equivalence relation. Some common geometrically defined equivalence relations include congruence, similarity, and equivalence under translations. A natural question is to determine the maximum number of occurrences of a given pattern in an $n$-point set. In this paper, we restrict our attention to planar point sets. While this question is quite well understood for similarity and translation classes (see~\cite{BP}), the case of congruence classes remains far less understood.

Among the earliest examples of such a problem is the classical \emph{unit distance problem}, asked by Erd\H{o}s in 1946~\cite{Erd}. Here, the geometric pattern is simply a pair of points at distance one. Let $u(n)$ denote the maximum number of unordered pairs of points at distance one contained in any set of $n$ points in the plane. Erd\H{o}s~\cite{Erd} showed that $u(n) = \Omega(n^{1+\delta/\log \log n})$ for some constant $\delta > 0$ and $u(n) = O(n^{3/2})$. The best known upper bound $u(n) = O(n^{4/3})$ was proved by Spencer, Szemer\'edi, and Trotter in 1984~\cite{SST}. On the other hand, Erd\H{o}s's lower bound, which was commonly believed to be the true order of growth of $u(n)$, stood until the recent breakthrough by OpenAI~\cite{OpenAI}, where the authors improved it by a polynomial factor in $n$. Shortly afterwards, Sawin~\cite{Saw} gave a quantitative refinement of the construction, with an explicit exponent. Namely, he showed that there exists a positive constant $C$ such that $u(n) \ge C n^{1+\delta_0}$ for infinitely many $n$, where $\delta_0 = 0.014114$. We use this constant throughout the rest of this paper.

Moving to more general patterns, given a finite nonempty set $S \subset \R^2$, we define $f_S(n)$ as the maximum number of congruent copies of $S$ contained in any set of $n$ points in the plane. Trivially, $f_S(n) = n$ if $S$ is a singleton. The unit distance problem corresponds to the case where $S$ is any set of two points. Note that $f_{S'}(n) = f_S(n)$ if $S'$ is similar to $S$. Thus, when $|S| = 2$, the exact distance between the points is immaterial, and $f_S(n)$ simplifies to $u(n)$. Taking $\lfloor n/|S| \rfloor$ disjoint copies of $S$ and adding extra points as required yields $f_S(n) \ge \lfloor n/|S| \rfloor = \Omega_S(n)$. Conversely, $f_S(n) \le 4 u(n) = O(n^{4/3})$, because any congruent image of $S$ contained in the $n$-point set is already determined (up to reflection) by the mapping of two fixed points in $S$~\cite{BP}. In the following theorem, we extend the new lower bound for the unit distance problem to an arbitrary set $S$.

\begin{theorem}
\label{thm:main}
    Let $S \subset \R^2$ be a finite set with $|S| \ge 2$. Then there exist positive constants $C_S$ and $\delta_S$ depending only on $S$ such that $f_S(n) \ge C_S n^{1+\delta_S}$ for infinitely many $n$. Moreover, we have $f_S(n) \ge (C_S/2) n^{1+\delta_S/2}$ for all $n$ large enough in terms of $S$, which implies $f_S(n) = \Omega_S(n^{1+\delta_S/2})$. For sets $S$ that are affinely independent\footnote{A finite set $S \subset \C$ is said to be affinely independent over $\Qbar$ if upon translating an arbitrary point in $S$ to the origin, the remaining points are linearly independent over $\Qbar$.} over $\Qbar$, one may choose $\delta_S = \delta/(|S| 
    \log |S|)$ for some absolute constant $\delta > 0$.
\end{theorem}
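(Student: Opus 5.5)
We follow the OpenAI/Sawin construction and recall its structure. There is a CM number field $K$ (a totally imaginary quadratic extension of a totally real field), with a fixed embedding $K \hookrightarrow \C$. Inside $K$ one finds a large finite set $U$ of elements of absolute value $1$; these are quotients $\beta/\bar\beta$ built from many split primes with small norms. One also chooses a lattice-type set $P \subset \mathcal{O}_K$, namely the elements of bounded size in every archimedean embedding. Then $|P| = n$, and the pairs $(x, x+u)$ with $x \in P$, $u \in U$, $x+u \in P$ give $\gg n^{1+\delta_0}$ unit distances. Every such pair is a translation followed by a rotation by an element $u \in U$, and $u$ lies in $K$.

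\textbf{Reduction to algebraic coordinates.} By similarity invariance we normalise $S$ so that $0, 1 \in S$. We then handle the coordinates of $S$ as follows.
\begin{itemize}
\item If every element of $S$ is algebraic, we enlarge $K$ to a CM field $L$ containing $K$ and $S$. For example, take the compositum of $K$ with the Galois closure of $\Q(S \cup \bar S)$ and its CM hull. The number of split primes needed only grows with $[L:\Q]$.
\item In general, $S$ generates a finitely generated field $F$ over $\Q$ with transcendentals. We choose a transcendence basis. Since congruent copies of $S$ in a point set are described by polynomial identities, we can specialise the transcendentals to suitable algebraic values. Better, we work in the $\Qbar$-span: write $S = \{0, s_1, \dots, s_k\}$ and pick a $\Qbar$-basis $t_1, \dots, t_m$ of the span.
\end{itemize}

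\textbf{The point set.} The candidate set is
\[
X = \Big\{ \sum_{j} a_j t_j : a_j \in P \Big\},
\]
where the $a_j$ range over a box in $\mathcal{O}_L$ after clearing denominators. A congruent copy of $S$ has the form $z + uS$ with $u \in U$. Writing each $s_i = \sum_j c_{ij} t_j$ with $c_{ij} \in L$ (after scaling, in $\mathcal{O}_L$), each point $z + u s_i$ has coordinates $a_j + u c_{ij}$. So if $z \in X$ has coordinates $a_j$ lying in a smaller box, all of $z + uS \subset X$. This produces about $|P|^m |U|$ copies in a set of size about $|P|^m$.

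\textbf{Parameters and exponents.} The number of copies is thus about $|X| \cdot |U|$. Since $|U|$ is $|P|^{\delta}$ for the single-coordinate problem, this gives $|X|^{1+\delta/m}$. The exponent degrades because $m \le |S|-1$, and also because the degree $[L:\Q]$ enters Sawin's bound. In the affinely independent case $m = |S|-1$ and no field enlargement beyond absolute bounds is needed (the $c_{ij}$ are rational). Optimising the box sizes in Sawin's estimates then gives $\delta_S \asymp \delta/(|S|\log|S|)$; the $\log$ comes from balancing the box growth over $m$ coordinates.

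\textbf{Remaining steps and main obstacle.} The "infinitely many $n$" statement comes from Sawin's sequence of admissible $n$. Interpolation to all $n$ uses disjoint unions: for a given $n$, take the largest admissible $N \le n$. To guarantee $N \ge n^{1/2}$ (roughly), we need the admissible values to be polynomially dense, which holds for Sawin's construction; this costs the factor $\delta_S/2$. The main obstacle is checking that Sawin's rotation set $U$ survives the enlargement of the number field and the multi-coordinate box, so that a positive proportion of translates $z$ keep all $|S|$ points inside $X$. I would handle this by shrinking the box for $z$ by a constant factor depending on $\max|c_{ij}|$ in all embeddings.
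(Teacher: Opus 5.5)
Your overall architecture (write $S$ in a basis, use a product lattice with translations by $\beta c_{ij}$, shrink the box, take disjoint blocks) matches the paper's, but two steps fail as written. The main gap is the enlargement of the number field. Sawin's bound $|U_m|\ge C_0^{-1}e^{2d_m\mu}$ is not a statement about an arbitrary CM field containing $K$. It comes from the specific pro-$2$ tower of Proposition~\ref{prop:Sawin} (relative root discriminant $\lambda$, prescribed splitting of the primes in $S_{\Q}$), and the margin is thin. So ``the number of split primes needed only grows with $[L:\Q]$'' has nothing behind it. Worse, the field you ask for need not exist. Subfields of CM fields are totally real or CM, so for $S=\{0,1,\sqrt[3]{2}\}$ no CM field contains $S$. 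Your $\Qbar$-span variant hits the same wall, since it still needs every $c_{ij}$ in $L$. (Specialising transcendentals is not an option either, since it changes the pattern.) The paper never modifies the tower. It expands $S$ in a basis over $K_\infty=\bigcup_m K_m$, so all coefficients lie in a fixed $\mathcal{O}_{K_{m_0}}$. Every irrationality outside $K_\infty$ (transcendental, or algebraic like $\sqrt[3]{2}$, which lies in no extension of $\Q$ of $2$-power degree) is absorbed into the basis, raising $r$. Injectivity of $\Pi$ then comes from $K_\infty$-linear independence together with $\sigma_1$ being the identity. Your route can be repaired in the same spirit: keep $\alpha$ and $U$ from Sawin's $K_m$, and enlarge only the lattice, to $\Phi(I\mathcal{O}_{L_m})$ with $L_m=K_m(c_{ij})$, which need not be CM. Because $K_m$ is CM, $|\sigma(\beta)|^2=\sigma(\alpha)$ still holds for every embedding $\sigma$ of $L_m$. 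The norm argument still gives separation $Y^{-1}$, and the price is a factor $[L_m:K_m]$, bounded in $m$, in the dimension. But that is not what the proposal does.

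The counting is also wrong as stated. First, $x,x+u\in P\subset\mathcal{O}_K$ forces $u\in\mathcal{O}_K$ with $u\bar u=1$, i.e.\ $u$ is a root of unity by Kronecker's theorem. One must instead translate by $\beta\in I$ with $\beta\bar\beta=\alpha$ inside $\Phi(I)$, measure with the $\alpha$-weighted norm, and divide by $\sqrt{\sigma_1(\alpha)}$ when projecting. More importantly, shrinking each of the $dr$ complex coordinates from radius $R$ to $R-C$ keeps only a $(1-C/R)^{2dr}$ fraction of the volume. This is exponentially small in $d$, not a positive proportion. Even this fraction is only available on average over cosets $b+\Lambda^r$ (Lemma~\ref{lem:exp}); for the lattice itself, with box radius a bounded multiple of the minimum in growing dimension, you have no control over the count in the smaller box. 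The construction works only because $|U|\ge C_0^{-1}e^{2d\mu}$ beats this loss once $R$ is chosen with $\eta(R)=\mu+r\log(1-C/R)>0$, giving $\delta_S=\eta(R)/(r\log(2RY+1))$. This is exactly where $C$ and $r$ enter, and the choice $R=rR_0$ is what yields $\delta/(|S|\log|S|)$ in the affinely independent case. Finally, for all large $n$ you must compare the gain $e^{2d_m\eta}$ with the scale $Z^{d_m}$, where $Z=(2RY+1)^{2r}$, using $d_{m+1}=2d_m$. The actual sizes $|P_m|$ are only known to lie between $C_0^{-1}e^{2d_m\eta}$ and $Z^{d_m}$, so ``the largest admissible $N\le n$ satisfies $N\ge n^{1/2}$'' is not available.
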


The proof of Theorem~\ref{thm:main} can be used to obtain an explicit choice of $\delta_S$ for any given set $S$. Consequently, a general dependence of $\delta_S$ on $S$ can be extracted from the proof. However, we do not state it here since it is not easily expressible as an invariant of $S$ under similarity transformations. We compute the constant $\delta_S$ for a few different choices of $S$ in Section~\ref{sec:examples}. One example of particular interest is $S = \Delta$, where $\Delta$ denotes the vertex set of an equilateral triangle with unit side length. The problem of determining $f_\Delta(n)$ was raised in \cite{AF}. As shown in Section~\ref{sec:examples} (see Example~\ref{ex:cyclotomic}), Theorem~\ref{thm:main} implies $f_\Delta(n) = \Omega(n^{1+\delta_0/2})$, which improves upon the previously best known lower bound $\Omega(n^{1+\delta/\log \log n})$ for some constant $\delta > 0$.

Erd\H{o}s and Purdy~\cite{EP1, EP2} raised and studied the following question: determine the maximum number of pairwise congruent triangles spanned by $n$ points in the plane. This problem has since been reiterated in several places (see~\cite{BP}, \cite[Section~6.1]{BMP}, \cite{Pach}). Let $\mathcal{T}$ denote the collection of point sets $S$ consisting of points corresponding to the vertices of a triangle. Then Erd\H{o}s and Purdy's question is equivalent to determining $f(n) \colonequals \max_{S \in \mathcal{T}} f_{S}(n)$. It is known that $f(n) = \Omega(n^{1+\delta/\log \log n})$ for some constant $\delta > 0$ and $f(n) = O(n^{4/3})$ (see~\cite[Section~6.1]{BMP}). Theorem~\ref{thm:main} implies $f(n) \ge f_\Delta(n) = \Omega(n^{1+\delta_0/2})$, thus improving upon the above lower bound.

Brass and Pach asked the following problem\footnote{We state a slightly relaxed version of their problem since the original version asks for at least $n e^{c(S) \log n/\log \log n}$ congruent copies of $S$ for all $n$, which is not possible since $n$ must be at least $|S|$ to contain even one copy of $S$.}~\cite[Problem~6]{BP}: Does there exist, for every finite set $S$, a positive constant $c(S)$ such that $f_S(n) \ge n e^{c(S) \log n/ \log \log n}$ for $n$ large enough in terms of $S$? It is known that the answer is yes if $|S| = 3$ or if $S$ is a scaled subset of the square or triangular lattice (see \cite{BP} and \cite[Section~6.1]{BMP}). Brass and Pach state that it is likely that the answer is no for most other patterns. However, Theorem~\ref{thm:main} provides an affirmative answer to their problem for all patterns $S$ and also improves the bound by a polynomial factor in $n$.

A related problem is to understand how $f_S(n)$ varies with $S$ (see~\cite[(2)]{BP} for a general version of this problem). Pach conjectured~\cite[Section~6.1, Conjecture~1]{BMP} that there exists a triangle $T$ such that $\lim_{n \rightarrow \infty} f_T(n)/f_{\Delta}(n) = 0$, where $\Delta$ is a unit equilateral triangle. Theorem~\ref{thm:main} does not resolve this conjecture. However, the proof shows that the exponent obtained by our method depends on the arithmetic features of the points in $S$. It remains unclear whether this dependence reflects the true extremal behavior of $f_S(n)$ or only the limitations of the construction. In the former case, triangles of the form $T_z \colonequals \{0, 1, z\} \subset \C$ with $z$ transcendental could be plausible candidates to prove this conjecture. More generally, one may ask whether there exist finite nonempty sets $S, T \subset \R^2$ each containing at least two points such that $f_S(n) = o(f_T(n))$? In this direction, one can show that for each $c > 0$, there exist sets $S$ and $T$ such that $f_S(n) \le c f_T(n)$ for all $n$. This can be seen by taking $T = \{0,1\}$ and $S = \{0\} \cup \{z \colon z^m = 1\}$ with $m$ large.

\subsection{Proof overview}
\label{sec:overview}

We begin with a brief overview of the OpenAI unit distance construction. Since our proof builds upon Sawin's effective refinement~\cite{Saw}, we adopt his framing, but the core ideas are the same. The main arithmetic input is a tower of CM fields $K = F(i)$ of degree $2d$, a fractional ideal $I$ in $K$, and a totally positive element $\alpha \in F$ such that the set $U \colonequals \{\beta \in I \colon \beta \overline{\beta} = \alpha\}$ is large. This ideal $I$ maps to a lattice $\Lambda$ under the Minkowski embedding $\Phi \colon K \hookrightarrow \C^d$ which maps $x$ to $(\sigma_1(x), \dots, \sigma_d(x))$. Now consider the intersection of this lattice $\Lambda$ with a polydisc to obtain a set $X \subset \C^d$. One can show that this polydisc can be chosen so that for each $\beta \in U$, there are many $x \in X$ for which $x + \Phi(\beta)$ is also in $X$ (all $x \in X$ inside a slightly smaller polydisc have this property). Finally, projecting this set onto the complex plane via the map $\pi$ that sends $x$ to $x_1/\sqrt{\sigma_1(\alpha)}$ yields a set with many unit distances since
\begin{equation*}
    |\pi(x + \Phi(\beta)) - \pi(x)| = |\pi(\Phi(\beta))| = \left|\sigma_1(\beta)/\sqrt{\sigma_1(\alpha)}\right| = 1.
\end{equation*}

To accommodate an arbitrary pattern $S = \{z_1, \dots, z_k\}$, we express its points in a basis over the union $K_\infty$ of the fields in the tower. Specifically, one may choose a basis $\{\omega_1, \dots, \omega_r\}$ so that
\begin{equation*}
    z_i = \sum_{j=1}^r a_{ij} \omega_j,
    \qquad a_{ij}\in \mathcal{O}_{K_\infty},
\end{equation*}
for each $i \in [k]$. After discarding finitely many initial levels in the tower, all coefficients lie in the base field. We again consider the lattice $\Lambda \subset \C^d$ obtained from the ideal $I$ under the Minkowski embedding, but we now work with the product lattice $\Lambda^r \subset (\C^d)^r$. Again, we are able to choose a polydisc in $\C^{dr}$ such that its intersection $X$ with $\Lambda^r$ has the following property: for each $\beta \in U$, there are many $x \in X$ for which $x + (\Phi(a_{i1}\beta), \ldots, \Phi(a_{ir}\beta))$ is also in $X$ for all $i \in [k]$. We then project this set onto the complex plane via the map $\Pi$ that sends
\begin{equation*}
    (x_1,\ldots,x_r) \mapsto
    \sum_{j=1}^r \omega_j \frac{\pi_1(x_j)}{\sqrt{\sigma_1(\alpha)}},
\end{equation*}
where $\pi_1(x_j)$ denotes the first coordinate of $x_j$. The $K_\infty$-linear independence of the $\omega_j$'s guarantees that this projection is injective on $X$. Moreover, the projected point set contains many congruent copies of $S$ since for each $i \in [k]$, we have
\begin{equation*}
    \Pi(x + (\Phi(a_{i1}\beta), \ldots, \Phi(a_{ir}\beta))) = \Pi(x) + \sum_{j=1}^r \omega_j \frac{\sigma_1(a_{ij} \beta)}{\sqrt{\sigma_1(\alpha)}} = \Pi(x) + \frac{\beta}{\sqrt{\alpha}} \sum_{j=1}^r \omega_j a_{ij} = \Pi(x) + w_\beta z_i,
\end{equation*}
where $|w_\beta| = 1$. Here, we crucially take the embedding $\sigma_1$ to be the ordinary inclusion into $\C$.

Finally, since Sawin's number fields can be chosen to have degrees equal to all sufficiently large powers of $2$, we can assemble several far-apart translated blocks of the obtained configurations to obtain larger configurations with many congruent copies of $S$. This scaling argument yields the lower bound for all large enough $n$ with half the superlinear exponent.

\subsection{Organization}

In Section~\ref{sec:prelims}, we recall the number field construction from \cite{Saw}. In Section~\ref{sec:construction}, we describe our construction and prove Theorem~\ref{thm:main}. Finally, we conclude with some explicit examples and a discussion about the dependence of the exponent $\delta_S$ on $S$ in Section~\ref{sec:examples}.

\begin{note}
    The dependence of $\delta_S$ on the arithmetic features of $S$ in the present argument may not be intrinsic. We expect that a refinement of the construction should allow $\delta_S$ to be chosen to depend only on $|S|$. Since this refinement is not needed for the results proved here, we do not pursue it in the present version; we hope to include the details in a subsequent version.
\end{note}

\section{Preliminaries}
\label{sec:prelims}

In this section, we recall Sawin's number field construction~\cite{Saw}, which is the main number theoretic ingredient in our proof. We shall use the same explicit parameter choices as in his proof. To begin, recall the choice of sets of primes $T$ and $S_{\Q}$ from the proof of \cite[Theorem~1]{Saw}. Further, recall the choice of the function $k \colon S_{\Q} \rightarrow \N$ and write
\begin{align*}
    D &= \prod_{q \in T} q, \\
    Q_0 &= \Q(\sqrt{D}), \\
    \lambda &= \sqrt{4D}, \\
    Y &= \prod_{p \in S_{\Q}} p^{k(p)/(2e(p))}, \\
    \mu &= \frac{1}{2} \log\frac{2\pi}{e} + \sum_{p \in S_{\Q}} \frac{\log(k(p)+1)}{4e(p)} - \frac{1}{4} \log \lambda - \frac{1}{2} \log \log \lambda, \\
    C_0 &= 8 \lambda^2,
\end{align*}
where the function $e \colon S_{\Q} \rightarrow \N$ is defined as
\begin{equation*}
    e(p) = \begin{cases}
        2, & \text{ if } p = 2 \text{ or } p \in T,\\
        1, & \text{ otherwise}.
    \end{cases}
\end{equation*}
Further, writing $R_0 = 72$, one has
\begin{equation*}
    \frac{\mu + \log(1 - 1/R_0)}{\log (2 R_0 Y + 1)} \ge \delta_0.
\end{equation*}

We are now ready to state the arithmetic proposition which we shall use as a black box in our construction. It combines Lemmas~8, 9, 11, and 12, and Remark~13, along with the explicit choice of parameters in the proof of Theorem~1 in \cite{Saw}. We provide a short proof since Sawin~\cite{Saw} leaves some of the assertions in the proposition implicit.

\begin{proposition}
\label{prop:Sawin}
    There exists $\ell_0 \in \N$ and a nested sequence of totally real Galois extensions
    \begin{align*}
        F_0 \subset F_1 \subset F_2 \subset \dots
    \end{align*}
    of $\Q$ such that $d_m \colonequals [F_m \colon \Q] = 2^{m+\ell_0}$ and the fields $K_m \colonequals F_m(i)$ are nested CM fields. For every $m$, there is a fractional ideal $I_m$ of $K_m$ and a totally positive element $\alpha_m \in N_{K_m/F_m}(I_m)$, where $N_{K_m/F_m}(I_m)$ denotes the fractional ideal of $F_m$ generated by $\{\beta \overline{\beta} \colon \beta \in I_m\}$, for which
    \begin{align*}
        U_m \colonequals \{ \beta \in I_m \colon \beta \overline{\beta} = \alpha_m\} \quad \text{ and } \quad A_m \colonequals \left|\left(N_{K_m/F_m}(I_m)/(\alpha_m)\right)\right|
    \end{align*}
    satisfy $A_m^{1/(2d_m)} = Y$ and $C_0^{-1} \exp(2d_m\mu) \le |U_m| < \infty$. Moreover, $F_0$ contains $\Q(\{\sqrt{q} \colon q \in T\})$.
\end{proposition}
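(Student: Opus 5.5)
The plan is to read off Sawin's construction and check each assertion against the cited lemmas; the proposition only repackages statements that are already in \cite{Saw}. \textbf{Step 1: the tower.} Following the proof of \cite[Theorem~1]{Saw}, let $F_0$ be the compositum of $\Q(\{\sqrt{q}\colon q\in T\})$ with the auxiliary totally real multiquadratic field Sawin uses, so $F_0$ is Galois of degree $2^{\ell_0}$. Build $F_{m+1}$ from $F_m$ by adjoining square roots of suitable totally positive rationals, as in Lemmas~8 and~9 of \cite{Saw}. This makes each $F_m$ a multiquadratic extension of $\Q$, hence totally real and Galois, with $[F_m:\Q]=2^{m+\ell_0}$. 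The fields $K_m=F_m(i)$ are then CM, being totally imaginary quadratic extensions of totally real fields, and they are nested because the $F_m$ are. Since $F_0$ contains $\Q(\{\sqrt{q}\colon q\in T\})$ by construction, the final assertion holds immediately.

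\textbf{Step 2: the ideal and $\alpha_m$.} In each $K_m$, take $I_m$ and $\alpha_m$ from Lemmas~11 and~12 of \cite{Saw}. Lemma~11 prescribes the splitting behaviour of the primes in $S_{\Q}$ in $K_m/F_m$, and the ideal is built from products of the split primes raised to the exponents $k(p)$.
\begin{itemize}
\item[(a)] $\alpha_m\in N_{K_m/F_m}(I_m)$ and $\alpha_m$ is totally positive. This follows since $\alpha_m$ arises as a norm-type element, or is chosen by Remark~13 via a narrow class group argument.
\item[(b)] The index $A_m$ of $(\alpha_m)$ inside $N_{K_m/F_m}(I_m)$ is $\prod_{p\in S_\Q} p^{k(p)d_m/e(p)}$. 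This is a local computation using the fact that each $p$ has ramification index $e(p)$ in $F_m$, since only $2$ and the primes in $T$ ramify in the multiquadratic tower. Taking $2d_m$-th roots then gives exactly $Y$.
\end{itemize}
The point to verify is that $e(p)$ is the ramification index in the whole tower and not just in $F_0$. It is, because all further ramification occurs only at primes already ramified in $F_0$, namely those in $T\cup\{2\}$.

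\textbf{Step 3: counting $U_m$.} Finiteness of $U_m$ is automatic. The condition $\beta\overline\beta=\alpha_m$ forces $|\sigma(\beta)|^2=\sigma(\alpha_m)$ at every embedding, so $U_m$ is a bounded subset of the lattice $\Phi(I_m)$.

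For the lower bound, Sawin's count (Lemma~12 with Remark~13) expresses $|U_m|$ as a number of ideal factorizations times a unit-group contribution, and then divides by a class number. The factorizations number $\prod_p (k(p)+1)^{d_m/(2e(p))}$, and the roots of unity contribute further. The class number $h^-$ is bounded above via Brauer–Siegel-type explicit estimates in terms of the discriminant, where the $\lambda$, $\log\lambda$ and $\frac12\log\frac{2\pi}{e}$ terms enter. Combining these gives $|U_m|\ge C_0^{-1}\exp(2d_m\mu)$; the constant $C_0=8\lambda^2$ absorbs the unit-index and root-of-unity losses.

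I expect this last step to be the main obstacle: assembling the explicit constants, especially the root discriminant bound $\lambda$, uniformly in $m$. My first move would be to quote Sawin's displayed inequality verbatim and check only that his parameters coincide with those above.
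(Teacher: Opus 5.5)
\textbf{Gap in Step~1: the tower cannot be multiquadratic.} This is the substantive part of the proposition. A totally real multiquadratic field $\Q(\sqrt{a_1},\dots,\sqrt{a_n})$ ramified only at primes of a fixed finite set $\Sigma$ has degree at most $2^{|\Sigma|}$. Each quadratic subfield $\Q(\sqrt a)$ is real and unramified outside $\Sigma$, so $a$ may be taken to be a positive squarefree product of primes in $\Sigma$. Your claim in Step~2 that all ramification stays inside $T\cup\{2\}$ is therefore incompatible with $[F_m\colon\Q]=2^{m+\ell_0}\to\infty$. A multiquadratic tower of unbounded degree must acquire new ramified primes at infinitely many levels, and then the root discriminant of $F_m$ grows with $m$.

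Sawin's class-number input needs the whole tower to have bounded root discriminant. This is the normalization $\operatorname{rd}_{K_m/F_m}=\lambda$ established in the proof of \cite[Lemma~12]{Saw}, together with the prescribed ramification indices, inertia degrees and splitting at $S_\Q$. It is through this that $\lambda$ enters both $\mu$ and $C_0$, so the bound $|U_m|\ge C_0^{-1}\exp(2d_m\mu)$ with $m$-independent constants is lost. Unbounded degree with bounded ramification is exactly what forces an infinite pro-$2$ tower. The fields must be taken inside the extension $\mathcal L/Q_0$ of \cite[Lemma~11]{Saw}, whose finite subextensions containing $E=\Q(\{\sqrt q\colon q\in T\})$ inherit the needed local behaviour. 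Note also that Lemmas~8 and~9 of \cite{Saw} are the counting and relative class-number inputs that produce $I_m$ and $\alpha_m$; they are not a tower construction.

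\textbf{Missing step: extracting a tower of the stated shape from $\mathcal L$.} The paper first notes that the conditions defining $\mathcal L$ are invariant under $\operatorname{Gal}(Q_0/\Q)$. Hence $\mathcal L/\Q$ is Galois, $\Gamma=\operatorname{Gal}(\mathcal L/\Q)$ is an infinite pro-$2$ group, and $H_0=\operatorname{Gal}(\mathcal L/E)$ is open and normal. It then builds open normal subgroups $H_0\supset H_1\supset\cdots$ of $\Gamma$ with $[H_m\colon H_{m+1}]=2$:
\begin{enumerate}
\item pick $1\ne x\in H_m$ and an open normal $W\trianglelefteq\Gamma$ with $x\notin W$;
\item pass to the finite $2$-group $P=\Gamma/(W\cap H_m)$, which has the nontrivial normal subgroup $N=H_m/(W\cap H_m)$;
\item take $M$ maximal among proper subgroups of $N$ normal in $P$;
\item use that a minimal normal subgroup of a finite $2$-group is central, hence of order $2$, and let $H_{m+1}$ be the preimage of $M$.
\end{enumerate}
Setting $F_m=\mathcal L^{H_m}$ gives nested totally real fields, Galois over $\Q$, of degree exactly $2^{m+|T|}$ since $[E\colon\Q]=2^{|T|}$; so $\ell_0=|T|$. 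Without this step you have neither Galois-ness over $\Q$ nor the exact degrees $2^{m+\ell_0}$ at every level.

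\textbf{What goes through once the fields are correct.} The rest of your outline is fine: quoting Sawin's estimate with his parameters, and finiteness of $U_m$ by discreteness. For total positivity, the clean argument is that the lower bound forces $U_m\neq\emptyset$. Then for every real embedding $\tau$ of $F_m$ and any extension $\sigma$ to $K_m$, one has $\tau(\alpha_m)=|\sigma(\beta)|^2>0$ for $\beta\in U_m$.
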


\begin{proof}
    Let $E = \Q\left(\{\sqrt{q} \colon q\in T\}\right)$. Let $\mathcal{L}/Q_0$ be the pro-$2$ extension whose Galois group is denoted by $G$ in \cite[Lemma~11]{Saw}. By \cite[Lemma~11]{Saw}, the group $G$ is infinite and $E\subseteq \mathcal{L}$. The conditions defining $\mathcal{L}$ are invariant under the nontrivial automorphism of $Q_0/\Q$. Hence, $\mathcal{L}/\Q$ is Galois. Thus, $\Gamma \colonequals \operatorname{Gal}(\mathcal{L}/\Q)$ is an infinite pro-$2$ group. Since $E/\Q$ is Galois, the subgroup $H_0 = \operatorname{Gal}(\mathcal{L}/E)$ is an infinite open normal subgroup of $\Gamma$.
    
    We construct recursively a descending sequence $H_0 \supset H_1 \supset H_2 \supset \dots$ of infinite open normal subgroups of $\Gamma$ such that $[H_m \colon H_{m+1}] = 2$. Suppose that $H_m$ has been constructed. Choose $x \in H_m \setminus \{1\}$. Then there is an open normal subgroup $W \trianglelefteq \Gamma$ with $x \notin W$. Let $V = W \cap H_m$, $P = \Gamma/V$, and $N = H_m/V$. Then $P$ is a finite $2$-group and $N$ is a nontrivial normal subgroup of $P$. Let $M$ be maximal among the proper subgroups of $N$ which are normal in $P$. The quotient $N/M$ is then a minimal nontrivial normal subgroup of $P/M$.
    
    Every nontrivial normal subgroup of a finite $2$-group meets its center nontrivially, as follows immediately from the class equation for the conjugation action. Hence, the minimality of $N/M$ implies that $N/M$ is central in $P/M$. It must therefore have order $2$, since otherwise it would contain a subgroup of order $2$, which would be a proper nontrivial normal subgroup of $P/M$. Taking $H_{m+1}$ to be the inverse image of $M$ gives the required subgroup.

    For $m \in \Z_{\ge 0}$, define $F_m = \mathcal{L}^{H_m}$. Then $F_0 = E$, $F_m \subset F_{m+1}$, $[F_{m+1} \colon F_m] = 2$, and every $F_m/\Q$ is Galois and totally real. Since the classes of the distinct primes in $T$ are linearly independent in $\Q^{\times}/(\Q^{\times})^2$, one has $[E \colon \Q] = 2^{|T|}.$ Consequently, $d_m=[F_m \colon \Q] = 2^{m+\ell_0}$, where $\ell_0 = |T|$. Moreover, $K_m = F_m(i)$ are nested CM fields.
    
    Note that each $F_m$ is a finite Galois subextension of $\mathcal{L}$ which contains $E$. Therefore the proof of \cite[Lemma~12]{Saw} applies to $F_m$ and $K_m$: the ramification indices and inertia degrees are the prescribed ones, every prime above $S_\Q$ splits in $K_m/F_m$, and $\operatorname{rd}_{K_m/F_m} = \lambda$. Applying \cite[Lemma~8]{Saw}, followed by the relative class-number bound \cite[Lemma~9]{Saw}, and substituting the explicit parameters from the proof of \cite[Theorem~1]{Saw}, gives a fractional ideal $I_m$ of $K_m$ and an element $\alpha_m \in N_{K_m/F_m}(I_m)$ satisfying $A_m^{1/(2d_m)} = Y$ and $|U_m| \ge C_0^{-1} \exp(2d_m \mu)$.
    
    The latter bound implies that $U_m$ is nonempty. Hence, for every real embedding $\tau \colon F_m \hookrightarrow \R$ and every extension $\sigma \colon K_m \hookrightarrow \C$, we have $\tau(\alpha_m) = |\sigma(\beta)|^2 > 0$ for $\beta \in U_m$. Therefore, $\alpha_m$ is totally positive. Finally, under the Minkowski embedding, $I_m$ is discrete, whereas the equations $|\sigma(\beta)|^2 = \sigma(\alpha_m)$ place the elements of $U_m$ in a compact product of circles. Thus, $U_m$ is finite.
\end{proof}

\section{Construction}
\label{sec:construction}

We begin our construction by fixing a tower of nested CM fields $K_0 \subset K_1 \subset K_2 \subset \dots$ coming from Proposition~\ref{prop:Sawin}. Define
\begin{equation*}
    K_\infty = \bigcup_{m=0}^\infty K_m.
\end{equation*}
Let $S \subset \C$ be a set consisting of $k \ge 2$ points. Suppose $S = \{z_1, \dots, z_k\}$. Let $V_\infty(S)$ denote the span of $S$ over $K_\infty$ and let $r = \mathrm{dim}_{K_\infty}(V_\infty(S))$. Let $B = \{\omega_1, \dots, \omega_r\}$ be any basis of $V_\infty(S)$. Then for each $i \in [k]$, we can find unique $(a_{ij})_{j \in [r]} \subset K_\infty$ such that 
\begin{equation*}
    z_i = \sum_{j=1}^r a_{ij} \omega_j.
\end{equation*}
Since the coefficient set $\{a_{ij} \colon i \in [k], j \in [r]\}$ is finite, all coefficients lie in $K_{m_0}$ for some integer $m_0 \ge 0$. We may assume without loss of generality that $a_{ij} \in \mathcal{O}_{K_{m_0}}$ for all $i \in [k]$ and $j \in [r]$ since otherwise the basis elements can be divided by appropriate positive integers to ensure this. Choose $C > 0$ such that
\begin{equation}
\label{eqn:C}
    \max_{\tau \colon K_{m_0} \hookrightarrow \C} \ \max_{i \in [k],\, j \in [r]} |\tau(a_{ij})| \le C.
\end{equation}

Now fix a level $m \ge m_0$ in the tower and write
\begin{equation*}
    F = F_m, \quad K = K_m, \quad d = d_m, \quad I = I_m, \quad U = U_m, \quad \alpha = \alpha_m.
\end{equation*}
Choose embeddings $\sigma_1, \dots, \sigma_d \colon K \hookrightarrow \C$, one from each conjugate pair, with $\sigma_1$ equal to the ordinary inclusion, to obtain the Minkowski embedding $\Phi \colon K \rightarrow \C^d$ defined by
\begin{equation*}
    \Phi(x) = (\sigma_1(x), \dots, \sigma_d(x))
\end{equation*}
for all $x \in K$. We equip $\C^d$ with the norm $\|\cdot\|_\alpha$ defined by
\begin{equation*}
    \|(w_1, \dots, w_d)\|_\alpha = \max_{\ell \in [d]} \frac{|w_\ell|}{\sqrt{\sigma_\ell(\alpha)}}.
\end{equation*}
This norm is well defined since $\alpha$ is totally positive (by Proposition~\ref{prop:Sawin}). Define
\begin{equation*}
    \Lambda \colonequals \Phi(I).
\end{equation*}
Then $\Lambda$ is a lattice in $\C^d$. As outlined in Section~\ref{sec:overview}, we shall consider the product lattice $\Lambda^r$ in $(\C^d)^r$, take a suitable finite subset of it, and project it onto the complex plane to obtain a set that contains many congruent copies of $S$.

For $R > 0$, let $B_\alpha(R)$ denote the closed ball of radius $R$ in $\C^d$ centered at the origin under the $\|\cdot\|_\alpha$ norm, namely,
\begin{equation*}
    B_\alpha(R) \colonequals \{w \in \C^d \colon \|w\|_\alpha \le R\}.
\end{equation*}
For each $i \in [k]$, we define a map $\varphi_i \colon U \rightarrow (\C^d)^r$ by
\begin{equation*}
    \varphi_i(\beta) = \left(\Phi(a_{i1}\beta), \ldots, \Phi(a_{ir}\beta)\right)
\end{equation*}
for all $\beta \in U$. Since $a_{ij} \in \mathcal O_K$ and $\beta \in I$, each coordinate of $\varphi_i(\beta)$ lies in $\Phi(I) = \Lambda$.
Let $R > 0$ be a constant to be chosen later. For $b =(b_1, \ldots, b_r) \in (\C^d/\Lambda)^r \cong \C^{dr}/\Lambda^r$, define
\begin{equation*}
    X_b = \prod_{j=1}^r \left((b_j+\Lambda) \cap B_\alpha(R)\right),  
\end{equation*}
and
\begin{equation*}
    E_b = \{(x,u) \in X_b \times U \colon x + \varphi_i(u) \in X_b \text{ for all } i \in [k]\}.
\end{equation*}

\begin{lemma}
\label{lem:exp}
    Suppose $R > C$. Then there exists $b \in (\C^d/\Lambda)^r$ such that
    \begin{equation*}
        |E_b| \ge  \left(1-\frac {C}{R}\right)^{2dr} |X_b| |U|.
    \end{equation*}
\end{lemma}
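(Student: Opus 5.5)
Averaging over the translate $b$ should do it: integrate $|E_b|$ and $|X_b|$ over a fundamental domain of $\Lambda^r$ in $(\C^d)^r$, and compare the two integrals. Let $\mathcal{F}$ be a fundamental domain for $\Lambda$ in $\C^d$, identified with $\C^d/\Lambda$, and equip $(\C^d/\Lambda)^r$ with Lebesgue (Haar) measure $\mathrm{d}b$. For a measurable set $A \subset (\C^d)^r$, unfolding gives
\begin{equation*}
    \int_{(\C^d/\Lambda)^r} \left|A \cap (b + \Lambda^r)\right| \, \mathrm{d}b = \mathrm{vol}(A).
\end{equation*}
Applying this to $A = B_\alpha(R)^r$ yields $\int |X_b| \, \mathrm{d}b = \mathrm{vol}(B_\alpha(R))^r$.

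Next I write $|E_b| = \sum_{u \in U} |\{x \in X_b \colon x + \varphi_i(u) \in X_b \ \forall i\}|$, using that $U$ is finite by Proposition~\ref{prop:Sawin}. Since every coordinate of $\varphi_i(u)$ lies in $\Lambda$, we have $x + \varphi_i(u) \in b + \Lambda^r$ automatically. So the condition is that $x$ lies in $A_u \colonequals \bigcap_{i=0}^{k} \bigl(B_\alpha(R)^r - \varphi_i(u)\bigr)$, where $\varphi_0 \colonequals 0$. The key geometric point is the bound
\begin{equation*}
    \|\Phi(a_{ij}u)\|_\alpha = \max_\ell \frac{|\sigma_\ell(a_{ij})||\sigma_\ell(u)|}{\sqrt{\sigma_\ell(\alpha)}} \le C,
\end{equation*}
which uses $|\sigma_\ell(u)|^2 = \sigma_\ell(\alpha)$ for $u \in U$ together with \eqref{eqn:C}. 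Here $\sigma_\ell$ restricted to $K_{m_0}$ is some embedding $\tau$. Hence each shift has $\|\cdot\|_\alpha$-norm at most $C$ in every block, so $A_u \supseteq B_\alpha(R-C)^r$. Integrating, $\int |E_b| \, \mathrm{d}b \ge |U| \, \mathrm{vol}(B_\alpha(R-C))^r$.

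For the volume ratio: $B_\alpha(R)$ is a product of $d$ complex discs of radii $R\sqrt{\sigma_\ell(\alpha)}$, so its real volume scales like $R^{2d}$. Therefore $\mathrm{vol}(B_\alpha(R-C))^r = (1 - C/R)^{2dr} \, \mathrm{vol}(B_\alpha(R))^r$. This gives
\begin{equation*}
    \int \left( |E_b| - (1-C/R)^{2dr} |U| |X_b| \right) \mathrm{d}b \ge 0.
\end{equation*}

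It follows that the integrand is nonnegative for some $b$, which is the claimed inequality. The one step needing care, and the main obstacle though it is routine, is justifying the unfolding identity and the existence of a good $b$. Each $|X_b|$ is finite because $B_\alpha(R)$ is compact and $\Lambda$ is discrete. The functions $b \mapsto |X_b|$ and $b \mapsto |E_b|$ are measurable, since they are sums of indicator functions of translates, and $\C^d/\Lambda$ is compact, so all integrals are finite. If the integral of a function is nonnegative, the function is nonnegative on a set of positive measure, in particular at some point. The case $X_b = \emptyset$ causes no problem, since the inequality $0 \ge 0$ then holds trivially.
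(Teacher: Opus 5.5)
Your proposal is correct and follows essentially the same route as the paper: average $|E_b|$ and $|X_b|$ over $b$ via the unfolding identity, use $\|\Phi(a_{ij}\beta)\|_\alpha \le C$ to see that for each $\beta \in U$ every point of $(b+\Lambda^r)\cap B_\alpha(R-C)^r$ is good, and compare volumes using the $(1-C/R)^{2dr}$ scaling of the weighted sup-norm ball. The differences are cosmetic: you use unnormalized Lebesgue measure on the torus instead of the Haar probability measure, and finiteness of the integrals follows from the unfolding identity (Tonelli) itself rather than from compactness of the torus alone.
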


\begin{proof}
    Taking the average over $b \in (\C^d)^r/\Lambda^r$ with respect to the Haar probability measure, the unfolding identity gives
    \begin{equation}
    \label{eqn:avg}
        \E_b |X_b| = \frac{\operatorname{vol}(B_\alpha(R)^r)}{\operatorname{covol}(\Lambda^r)}.
    \end{equation}
    For any $i \in [k]$, $j \in [r]$, and $\beta \in U$, it follows from \eqref{eqn:C} that
    \begin{equation*}
        \|\Phi(a_{ij}\beta)\|_\alpha = \max_{\ell \in [d]} \frac{|\sigma_\ell(a_{ij}\beta)|}{\sqrt{\sigma_\ell(\alpha)}} = \max_{\ell \in [d]} |\sigma_\ell(a_{ij})|
        \le C,
    \end{equation*}
    since $\beta \overline{\beta} = \alpha$. Moreover, for each $b \in (\C^d/\Lambda)^r$ and $i \in [k]$, $x + \varphi_i(\beta) \in b + \Lambda^r$ since $\varphi_i(\beta) \in \Lambda^r$. Thus, for each fixed $\beta \in U$, the set of $x \in X_b$ for which $x + \varphi_i(\beta) \in X_b$ for all $i \in [k]$ contains $(b + \Lambda^r) \cap B_\alpha(R-C)^r$. Again, the unfolding identity gives
    \begin{equation*}
        \E_b |E_b| \ge \sum_{\beta \in U} \frac{\operatorname{vol}(B_\alpha(R-C)^r)}{\operatorname{covol}(\Lambda^r)} = \frac{\operatorname{vol}(B_\alpha(R-C)^r)}{\operatorname{covol}(\Lambda^r)} |U|.
    \end{equation*}
    Dividing the above inequality by \eqref{eqn:avg}, we obtain
    \begin{equation*}
        \E_b |E_b| \ge \frac{\operatorname{vol}(B_\alpha(R-C)^r)}{\operatorname{vol}(B_\alpha(R)^r)} |U| \cdot \E_b |X_b| = \left(1 - \frac{C}{R}\right)^{2dr} |U| \cdot \E_b |X_b|.
    \end{equation*}
    The equality holds since $\|\cdot\|_\alpha$ is a weighted infinity norm. Collecting all terms above on one side, we conclude that there exists some $b \in (\C^d/\Lambda)^r$ that satisfies the assertion of the lemma.
\end{proof}

Write $\pi_1 \colon \C^d \rightarrow \C$ for the first-coordinate projection. Define the projection $\Pi \colon (\C^d)^r \rightarrow \C$ by
\begin{equation*}
    \Pi(x_1,\ldots,x_r) = \sum_{j=1}^r \omega_j\frac{\pi_1(x_j)}{\sqrt{\sigma_1(\alpha)}}.
\end{equation*}
Let $P_b$ denote the planar point set obtained by projecting $X_b$ onto $\C$ via the above map $\Pi$. For a finite point set $P \subset \C$, let $f_S(P)$ denote the number of congruent copies of $S$ contained in $P$.

\begin{lemma}
\label{lem:count}
    For each $b \in (\C^d/\Lambda)^r$, we have $|P_b| = |X_b|$ and $f_S(P_b) \ge |E_b|/k!$.
\end{lemma}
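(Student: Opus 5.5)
The argument has two parts: injectivity of $\Pi$ on $X_b$, and a map from $E_b$ to congruent copies of $S$ in $P_b$ whose fibres have size at most $k!$.

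\emph{Injectivity.} Suppose $x, y \in X_b$ with $\Pi(x) = \Pi(y)$. Since $x$ and $y$ lie in the same coset $b + \Lambda^r$, we can write $x - y = (\Phi(\gamma_1), \dots, \Phi(\gamma_r))$ with $\gamma_j \in I \subset K$. Then
\begin{equation*}
0 = \Pi(x) - \Pi(y) = \frac{1}{\sqrt{\sigma_1(\alpha)}} \sum_{j=1}^r \omega_j \sigma_1(\gamma_j) = \frac{1}{\sqrt{\alpha}}\sum_{j=1}^r \gamma_j \omega_j,
\end{equation*}
because $\sigma_1$ is the ordinary inclusion. Each $\gamma_j$ lies in $K \subset K_\infty$, and the $\omega_j$ are linearly independent over $K_\infty$, so all $\gamma_j = 0$. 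Since $\Phi$ is injective, $x = y$. Therefore $|P_b| = |X_b|$.

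\emph{Copies from $E_b$.} For $(x, \beta) \in E_b$, set $w_\beta = \beta/\sqrt{\alpha} \in \C$. From $\beta\overline{\beta} = \alpha$ (using $\sigma_1 = $ inclusion and $\alpha > 0$) we get $|w_\beta| = 1$. As in the overview, linearity of $\Pi$ gives
\begin{equation*}
\Pi(x + \varphi_i(\beta)) = \Pi(x) + \frac{\beta}{\sqrt{\alpha}}\sum_{j} a_{ij}\omega_j = \Pi(x) + w_\beta z_i .
\end{equation*}
Hence the set $T(x,\beta) = \{\Pi(x) + w_\beta z_i : i \in [k]\}$ is contained in $P_b$. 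It is the image of $S$ under the rotation $z \mapsto \Pi(x) + w_\beta z$, so it is a congruent copy of $S$, and it has exactly $k$ points.

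\emph{Bounding the fibres.} It remains to show that each copy $T \subset P_b$ arises from at most $k!$ pairs $(x, \beta)$.

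\begin{itemize}
\item Suppose $T(x,\beta) = T$. Then the map $i \mapsto \Pi(x) + w_\beta z_i$ is a bijection $S \to T$, and there are at most $k!$ such bijections.
\item The bijection determines the orientation-preserving similarity $z \mapsto \Pi(x) + w_\beta z$, because it fixes the images of two distinct points $z_1 \ne z_2$ of $S$. That similarity in turn determines $\Pi(x)$ and $w_\beta$.
\item By injectivity, $\Pi(x)$ determines $x$.
\item Finally, $w_\beta$ determines $\beta = w_\beta\sqrt{\alpha}$.
\end{itemize}

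So each copy $T$ has at most $k!$ preimages, and $f_S(P_b) \ge |E_b|/k!$.

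The only delicate step is injectivity. It depends on the differences of points of $X_b$ lying in $\Lambda^r = \Phi(I)^r$, which is exactly why all of $X_b$ is taken inside a single coset $b + \Lambda^r$.
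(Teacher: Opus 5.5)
Your proof is correct and follows essentially the same route as the paper. Both arguments get injectivity of $\Pi$ on $X_b$ from the $K_\infty$-linear independence of the $\omega_j$ (with $\sigma_1$ the inclusion), use the identity $\Pi(x+\varphi_i(\beta)) = \Pi(x) + w_\beta z_i$, and recover $w_\beta$, $\Pi(x)$, and then $x$ from the images of two distinct points of $S$. The only cosmetic difference is in the fibre bound: the paper shows the map $E_b \to P_b^k$, $(x,\beta)\mapsto (\Pi(x+\varphi_i(\beta)))_{i\in[k]}$, is injective, whereas you count bijections $S \to T$ for each copy $T$, and these amount to the same thing.
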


\begin{proof}
    To show that $|P_b| = |X_b|$, it suffices to show that $\Pi$ is injective on $X_b$. Suppose $x, y \in X_b$ are such that $\Pi(x) = \Pi(y)$. Then for each $j \in [r]$, we have $x_j - y_j \in \Lambda$, and so
    \begin{equation*}
        x_j - y_j = \Phi(\xi_j)
    \end{equation*}
    for some $\xi_j \in I$. Now, the equality of projections gives
    \begin{equation*}
        \sum_{j=1}^r \omega_j \sigma_1(\xi_j) = 0.
    \end{equation*}
    Since $\sigma_1(K)=K\subset K_\infty$ and $\omega_1,\ldots,\omega_r$ are $K_\infty$-linearly independent, all $\sigma_1(\xi_j)$ vanish.  As $\sigma_1$ is an embedding, all $\xi_j=0$, and hence $x = y$. Thus $\Pi$ is injective on $X_b$.

    Let $(x,\beta) \in E_b$. Then for each $i \in [k]$, the point $x+\varphi_i(\beta)$ belongs to $X_b$, and
    \begin{align*}
        \Pi(x + \varphi_i(\beta)) &= \Pi(x) + \sum_{j=1}^r \omega_j \frac{\sigma_1(a_{ij}\beta)}{\sqrt{\sigma_1(\alpha)}} \\
        &= \Pi(x) + \frac{\beta}{\sqrt{\alpha}} \sum_{j=1}^r a_{ij} \omega_j \\
        &= \Pi(x) + \frac{\beta}{\sqrt{\alpha}} z_i.
    \end{align*}
    Here we used that $\sigma_1$ is the distinguished embedding, so it acts as the identity on $K \subset \C$. Thus
    \begin{equation*}
        \Pi(x) + w_\beta S \subseteq P_b,
    \end{equation*}
    where $w_\beta = \beta/\sqrt{\alpha}$. Since $|w_\beta| = 1$, this is a congruent copy of $S$ in $P_b$. Now note that the map from $E_b$ to $P_b^k$ that maps $(x, \beta)$ to
    \begin{equation*}
        \left(\Pi(x+\varphi_1(\beta)), \ldots, \Pi(x+\varphi_k(\beta))\right)
    \end{equation*}
    is injective. Given a $k$-tuple $(y_1, \dots, y_k)$ in the image of this map, one can recover $\beta$ by
    \begin{equation*}
        \beta = \sqrt{\alpha} \left( \frac{y_2 - y_1}{z_2 - z_1}\right).
    \end{equation*}
    Further, one may recover $\Pi(x)$ by
    \begin{equation*}
        \Pi(x) = y_1 - \frac{\beta}{\sqrt{\alpha}} z_1 = \frac{y_1 z_2 - y_2 z_1}{z_2 - z_1},
    \end{equation*}
    from which one can further recover $x$ since $\Pi$ is injective on $X_b$. Thus, each congruent copy of $S$ contained in $P_b$ can correspond to at most $k!$ pairs $(x, \beta) \in E_b$, from which the desired result follows immediately.
\end{proof}

\subsection{Construction for infinitely many $n$}

For $R > C$, define
\begin{equation}
\label{eqn:eta}
    \eta(R) \colonequals \mu + r \log\left(1-\frac {C}{R}\right).
\end{equation}
We now prove the desired superlinear lower bound along an infinite subsequence of cardinalities arising from the tower construction.

\begin{proposition}
\label{prop:subseq}
    Let $R > C$ be a constant such that $\eta(R) > 0$. Let $\delta_S = \eta(R)/(r\log(2RY+1))$ and $C_S = (C_0 |S|!)^{-1}$. Then $f_S(n) \ge C_S n^{1+\delta_S}$ for infinitely many $n$.
\end{proposition}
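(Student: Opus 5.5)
For each level $m \ge m_0$ of the tower, I will apply Lemma~\ref{lem:exp} to obtain a shift $b$, and set $n = n_m \colonequals |X_b| = |P_b|$. By Lemma~\ref{lem:count} and Lemma~\ref{lem:exp},
\begin{equation*}
    f_S(n) \ge f_S(P_b) \ge \frac{|E_b|}{k!} \ge \frac{1}{k!}\left(1-\frac{C}{R}\right)^{2dr} n\,|U|.
\end{equation*}
Proposition~\ref{prop:Sawin} gives $|U| \ge C_0^{-1}\exp(2d\mu)$. Combining this with \eqref{eqn:eta}, the right-hand side is at least $C_S\, n \exp(2d\,\eta(R))$. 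It therefore remains to show that $\exp(2d\,\eta(R)) \ge n^{\delta_S}$, i.e.\ that $n \le (2RY+1)^{2dr}$. We will also need infinitely many distinct values of $n$.

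\textbf{Upper bound on $|X_b|$.} Since $X_b$ is a product of $r$ factors, it suffices to show that for every $b_j$,
\begin{equation*}
    \left|(b_j+\Lambda)\cap B_\alpha(R)\right| \le (2RY+1)^{2d}.
\end{equation*}
I will use a packing argument, which is where $Y = A^{1/(2d)}$ enters.

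Let $x \ne y$ be two points in this set. Then $x - y = \Phi(\xi)$ with $0 \ne \xi \in I$. Now $\xi\overline{\xi}$ lies in $N_{K/F}(I)$, which is an ideal containing $(\alpha)$ with index $A$. Hence
\begin{equation*}
    |N_{K/\Q}(\xi)| \ge N(N_{K/F}(I)) = N(\alpha)/A,
\end{equation*}
so that
\begin{equation*}
    \prod_{\ell=1}^{d} \frac{|\sigma_\ell(\xi)|^2}{\sigma_\ell(\alpha)} \ge A^{-1} = Y^{-2d}.
\end{equation*}
By AM--GM, or simply the maximum, this gives $\|\Phi(\xi)\|_\alpha \ge Y^{-1}$. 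So the points are $Y^{-1}$-separated in the weighted sup norm.

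Now place a ball of radius $1/(2Y)$ around each point. These balls are disjoint and all lie in $B_\alpha(R + 1/(2Y))$. The weighted sup-norm ball is a product of $d$ discs, each of real dimension $2$, so the volume ratio gives
\begin{equation*}
    \left(\frac{R+1/(2Y)}{1/(2Y)}\right)^{2d} = (2RY+1)^{2d}.
\end{equation*}
This is exactly the required bound. Note that the norm computation must use $N(\xi)$ against the index of $(\alpha)$ in $N_{K/F}(I)$ carefully; this is the step I expect to need the most care.

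\textbf{Infinitely many $n$.} Since $\eta(R) > 0$ and $d = d_m \to \infty$, the lower bound $|E_b| \ge C_S\, k!\, |X_b|\exp(2d\eta)$ forces $|E_b| > 0$ for large $m$, so $X_b \ne \emptyset$.

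Suppose the $n_m$ took only finitely many values. Then $f_S(n_m)$ would be bounded, because $f_S(n) \le \binom{n}{k}k!$. But
\begin{equation*}
    f_S(n_m) \ge C_S\, n_m \exp(2d_m\eta(R)) \ge C_S \exp(2d_m\eta(R)) \to \infty,
\end{equation*}
a contradiction. Hence infinitely many distinct $n$ occur, and each satisfies $f_S(n) \ge C_S n^{1+\delta_S}$.
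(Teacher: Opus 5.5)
Your argument follows the paper's proof. Lemmas~\ref{lem:exp} and~\ref{lem:count}, together with $|U|\ge C_0^{-1}e^{2d\mu}$, give $f_S(P_b)\ge C_S|P_b|e^{2d\eta(R)}$. The packing bound $|X_b|\le(2RY+1)^{2dr}$ then turns this into $C_S|P_b|^{1+\delta_S}$. Your derivation of the $Y^{-1}$-separation from $[N_{K/F}(I):(\alpha)]=Y^{2d}$ via norms and AM--GM is correct; it reproves the separation lemma that the paper cites from Sawin. Your contradiction argument for infinitely many distinct $n$ is a valid substitute for the paper's bound $|E_b|\le|P_b|^2$, provided $n_m\ge 1$.

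That proviso is where the proposal breaks. The claim that the lower bound $|E_b|\ge C_S\,k!\,|X_b|\exp(2d\eta)$ ``forces $|E_b|>0$, so $X_b\ne\emptyset$'' does not follow. The right-hand side is proportional to $|X_b|$, so when $X_b=\emptyset$ it just reads $0\ge 0$. Lemma~\ref{lem:exp} as stated does not rule this case out, because any $b$ with $X_b=\emptyset$ (hence $E_b=\emptyset$) satisfies its inequality trivially. Without $n_m\ge 1$, your step $C_S n_m e^{2d_m\eta}\ge C_S e^{2d_m\eta}$ fails, so the infinitude of the $n$'s is not established. The paper's deduction of $n\ge C_0^{-1}e^{2d\eta}$ from $|E_b|\le|P_b|^2$ tacitly divides by $|P_b|$ and needs the same fact.

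The repair is to choose $b$ more carefully. Put $g(b)=|E_b|-(1-C/R)^{2dr}|U|\,|X_b|$ and $G=\{b\colon X_b\ne\emptyset\}$. Since $g$ vanishes off $G$, the proof of Lemma~\ref{lem:exp} gives $\E_b[g(b)\mathbf{1}_G(b)]=\E_b[g(b)]\ge 0$. Moreover, $G$ has positive Haar measure because $\E_b|X_b|=\operatorname{vol}(B_\alpha(R)^r)/\operatorname{covol}(\Lambda^r)>0$. If $g<0$ on all of $G$, the left-hand expectation would be strictly negative. Hence some $b$ with $X_b\ne\emptyset$ satisfies the inequality of Lemma~\ref{lem:exp}, and with that $b$ the rest of your argument goes through.
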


\begin{proof}
    Fix an integer $m \ge m_0$. Hereafter, we suppress $m$ from the notation. By Lemma~\ref{lem:exp}, we can find $b \in (\C^{d}/\Lambda)^r$ such that
    \begin{equation}
    \label{eqn:Eb}
        |E_b| \ge \left(1 - \frac{C}{R}\right)^{2dr} |X_{b}||U|.
    \end{equation}
    By Sawin's separation lemma~\cite[Lemma~4]{Saw}, every nonzero vector of $\Lambda = \Phi(I)$ has $\|\cdot\|_\alpha$-norm at least $Y^{-1}$. A standard packing argument similar to \cite[Lemma~2]{Saw} then gives
    \begin{equation}
    \label{eqn:Xb}
        |P_b| = |X_b| \le (2RY+1)^{2dr},
    \end{equation}
    where the equality follows from Lemma~\ref{lem:count}. Further, it follows from Lemma~\ref{lem:count} that
    \begin{equation*}
        f_S(P_b) \ge (|S|!)^{-1}|E_b|.
    \end{equation*}
    Using \eqref{eqn:Eb} followed by $|X_b| = |P_b|$ and the lower bound on $|U|$ from Proposition~\ref{prop:Sawin} gives
    \begin{equation}
    \label{eqn:fspb}
        f_S(P_b) \ge (|S|!)^{-1} \left(1 - \frac{C}{R}\right)^{2dr} |P_b| (C_0^{-1} e^{2 d \mu}) = C_S |P_b| e^{2d \eta(R)} \ge C_S |P_b|^{1+\delta_S},
    \end{equation}
    where $\delta_S$ and $C_S$ are as defined in the statement of the proposition. The last inequality above follows from \eqref{eqn:Xb}. Now, the argument in Lemma~\ref{lem:count} implies that the map from $E_b$ to $P_b^2$ that sends a pair $(x, \beta)$ to the ordered pair $\left(\Pi(x + \varphi_1(\beta)), \Pi(x + \varphi_2(\beta))\right)$ is injective. Hence,
    \begin{equation*}
        |E_b| \le |P_b|^2.
    \end{equation*}
    Using \eqref{eqn:Eb} followed by $|X_b| = |P_b|$ and the lower bound on $|U|$ from Proposition~\ref{prop:Sawin} gives
    \begin{equation*}
        n \colonequals |P_b| \ge C_0^{-1} \exp(2d\eta(R)).
    \end{equation*}
    Moreover, \eqref{eqn:fspb} implies $f_S(n) \ge C_S n^{1+\delta_S}$. Finally, as $m \rightarrow \infty$, Proposition~\ref{prop:Sawin} implies $d_m \rightarrow \infty$, which together with the above inequality implies $n \rightarrow \infty$, hence giving the desired result.
\end{proof}

\subsection{Construction for all large $n$}

Using the idea outlined in Section~\ref{sec:overview}, we provide a way to construct sets with $n$ points that contain many congruent copies of $S$, for all large enough $n$.

\begin{lemma}
\label{lem:large}
    $f_S(n) \ge (C_S/2) n^{1+\delta_S/2}$ for all sufficiently large $n$.
\end{lemma}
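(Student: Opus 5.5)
We take the sets $P_{b,m}$ from the proof of Proposition~\ref{prop:subseq}, one for each level $m\ge m_0$. Write $n_m \colonequals |P_{b,m}|$ and $d_m = 2^{m+\ell_0}$. The plan is to fill in all large $n$ by taking disjoint, far-apart translates of a single block $P_{b,m}$, with $m$ chosen according to $n$, and then padding with isolated points. Two facts from that proof are needed, both with explicit dependence on $d_m$:
\begin{equation*}
C_0^{-1} e^{2d_m\eta(R)} \le n_m \le (2RY+1)^{2d_m r},
\qquad
f_S(P_{b,m}) \ge C_S\, n_m\, e^{2d_m\eta(R)}.
\end{equation*}
The right-hand inequality is \eqref{eqn:fspb}. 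Since $d_{m+1} = 2d_m$, consecutive block sizes grow at most like $n_{m+1} \le (2RY+1)^{4d_m r}$.

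Given a large $n$, choose $m$ maximal such that $n_m^2 \le n$; such $m$ exists once $n \ge n_{m_0}^2$, because $n_m \to \infty$. Put $t = \lfloor n/n_m\rfloor \ge 1$. Take $t$ translates of $P_{b,m}$, separated by distances larger than the diameter of $S$ plus the diameter of the block, so that the copies of $S$ counted inside different blocks are distinct. Then add $n - t n_m$ further far-away points, which creates no copies. Since $t n_m \ge n/2$, this gives
\begin{equation*}
f_S(n) \ge t\, f_S(P_{b,m}) \ge t\, C_S\, n_m\, e^{2d_m\eta(R)} \ge \tfrac{C_S}{2}\, n\, e^{2d_m\eta(R)}.
\end{equation*}
It therefore suffices to prove $e^{2d_m\eta(R)} \ge n^{\delta_S/2}$, that is, $2d_m\eta(R) \ge \tfrac{\delta_S}{2}\log n$.

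The main obstacle is controlling $\log n$ in terms of $d_m$, which is where the loss of the factor $2$ in the exponent occurs. By maximality of $m$ we have $n < n_{m+1}^2$, so $\log n < 2\log n_{m+1} \le 2\cdot 2d_{m+1} r\log(2RY+1) = 8 d_m r \log(2RY+1)$. Using $\delta_S = \eta(R)/(r\log(2RY+1))$, this yields only $\tfrac{\delta_S}{2}\log n < 4 d_m\eta(R)$, which is off by a factor of $2$.

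To recover that factor, the first thing I would try is to relax the choice of $m$: pick $m$ maximal with $n_m \le n$, so that $t \ge 1$ still holds and $t n_m \ge n/2$ still follows from $\lfloor x\rfloor \ge x/2$ for $x \ge 1$. Then $\log n < \log n_{m+1} \le 4 d_m r\log(2RY+1)$, so $\tfrac{\delta_S}{2}\log n < 2 d_m \eta(R)$, which is exactly the required inequality. Under this choice the only remaining care is the threshold: we need $n \ge n_{m_0}$, which holds for $n$ large in terms of $S$.
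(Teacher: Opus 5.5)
Your final argument is correct and is essentially the paper's proof: $m$ maximal with $n_m \le n$, then $\lfloor n/n_m\rfloor$ far-apart translates plus padding, then $n < n_{m+1} \le (2RY+1)^{4d_m r}$, which gives the factor-$2$ loss from $d_{m+1} = 2d_m$. The paper instead picks $m$ maximal with $Z^{d_m} \le n$, where $Z = (2RY+1)^{2r}$, which yields $n < Z^{2d_m}$ without appealing to $n_m \to \infty$; your initial $n_m^2 \le n$ attempt was an unnecessary detour, and you correctly diagnosed why it loses an extra factor of $2$.
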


\begin{proof}
    Let $Z \colonequals (2RY+1)^{2r}$. The proof of Proposition~\ref{prop:subseq} gives, for $m \ge m_0$, a set $P_m$ with
    \begin{equation*}
        n_m \colonequals |P_m| \le Z^{d_m} \qquad \text{and} \qquad f_S(P_m)\ge C_S n_m \exp(2d_m\eta).
    \end{equation*}
    Recall from Proposition~\ref{prop:Sawin} that $d_m = 2^{m+\ell_0}$. Let $n \ge Z^{d_{m_0}}$, and choose $m$ maximal such that $Z^{d_m} \le n$. Then $n < Z^{d_{m+1}} = Z^{2d_m}$. Since $n_m \le Z^{d_m} \le n$, we take
    \begin{equation*}
        q \colonequals \left\lfloor\frac{n}{n_m}\right\rfloor
    \end{equation*}
    mutually far apart translated copies of $P_m$, and add $n-qn_m$ extra points far away to form a point set $P$. The internal copies of $S$ inside the translated blocks remain, so we obtain
    \begin{equation*}
        f_S(n) \ge q f_S(P_m) \ge \frac{n}{2 n_m} C_S n_m \exp(2 d_m \eta) \ge \frac{C_S}{2} n^{1+\eta/\log Z} = \frac{C_S}{2} n^{1+\delta_S/2}.
    \end{equation*}
    Since the above estimate holds for all $n$ large enough, this finishes the proof of the lemma.
\end{proof}

\subsection{The affinely independent case}

We now consider the generic case where the set $S$ is affinely independent over $\Qbar$. In the following lemma, we compute $\delta_S$ for such sets.

\begin{lemma}
\label{lem:affine}
    For sets $S$ that are affinely independent over $\Qbar$, one may take $\delta_S = \delta/(|S| \log |S|)$ for some absolute constant $\delta > 0$.
\end{lemma}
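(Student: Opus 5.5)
Our plan is to apply Proposition~\ref{prop:subseq} with a specific choice of basis and radius $R$, and then read off $\delta_S$ from the formula $\delta_S = \eta(R)/(r\log(2RY+1))$. Write $S = \{z_1, \dots, z_k\}$ with $k = |S|$. Since the relevant quantities are unchanged when $S$ is translated (a translate of $S$ has the same $f_S$), we may translate so that $z_1 = 0$. Affine independence over $\Qbar$ then says that $z_2, \dots, z_k$ are linearly independent over $\Qbar$. Since $K_\infty \subset \Qbar$, they are also linearly independent over $K_\infty$.

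\textbf{Choice of basis.} We take $r = k-1$ and $\omega_j = z_{j+1}$ for $j \in [k-1]$. The coefficients are then $a_{1j} = 0$ and $a_{ij} = \mathbf{1}[j = i-1]$ for $i \ge 2$. All of them lie in $\{0,1\} \subset \mathcal{O}_{K_0}$, so $m_0 = 0$, and we may take $C = 1$ in \eqref{eqn:C}. This is the key point: in the generic case the constant $C$ in \eqref{eqn:C} and the starting level $m_0$ are absolute, so the only dependence on $S$ that survives is through $r = k-1$.

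\textbf{Choice of $R$.} We need $\eta(R) = \mu + r\log(1 - 1/R) > 0$ with a quantitative lower bound. We choose $R = 2r/\mu$; we may assume $\mu < 1$ so that this is at least $2$, and otherwise we take $R = \max(2, 2r/\mu)$. Using $\log(1-t) \ge -2t$ for $t \le 1/2$, we get $r\log(1-1/R) \ge -2r/R = -\mu$. This gives only $\eta \ge 0$, so we adjust to $R = 4r/\mu$, which yields $\eta(R) \ge \mu/2$. Here $\mu > 0$ is an absolute constant, which follows from the displayed inequality $\mu + \log(1-1/R_0) \ge \delta_0 \log(2R_0Y+1) > 0$ together with $\log(1 - 1/R_0) < 0$.

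\textbf{Computing the exponent.} Proposition~\ref{prop:subseq} gives
\[
\delta_S = \frac{\eta(R)}{r\log(2RY+1)} \ge \frac{\mu/2}{(k-1)\log(8(k-1)Y/\mu + 1)}.
\]
Since $Y$ and $\mu$ are absolute constants and $k \ge 2$, we have $\log(8(k-1)Y/\mu + 1) \le c\log k$ for an absolute constant $c$; the case $k = 2$ is handled because $\log 2 > 0$. This gives $\delta_S \ge \delta/(k\log k)$ with $\delta = \mu/(2c)$. Because $\delta_S$ only appears as a lower-bound exponent, shrinking it is harmless: Proposition~\ref{prop:subseq} and Lemma~\ref{lem:large} still hold with the smaller value, and $C_S = (C_0 k!)^{-1}$ is unaffected.

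The only delicate step is checking that nothing else in the construction depends on $S$. In particular, the injectivity argument in Lemma~\ref{lem:count} uses only the $K_\infty$-independence of the $\omega_j$, which we have verified. The translation reduction is also harmless, since Lemma~\ref{lem:count} divides by $z_2 - z_1$, and this is nonzero because the points of $S$ are distinct.
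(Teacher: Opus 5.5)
Your proof is correct and follows the paper's argument: you translate so that one point is $0$, use $S\setminus\{0\}$ as the $K_\infty$-basis so that $C=1$ and $r=k-1$, and choose $R$ proportional to $r$ so that $\eta(R)$ is bounded below by an absolute constant while $\log(2RY+1)=O(\log k)$. The only difference is cosmetic: the paper takes $R=rR_0$ and uses Bernoulli's inequality $r\log(1-1/(rR_0))\ge\log(1-1/R_0)$ to reduce to Sawin's positive numerator, whereas you use $\log(1-t)\ge -2t$ to get $\eta(R)\ge\mu/2$ (state $R=\max(2,4r/\mu)$ directly rather than ``assuming $\mu<1$'').
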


\begin{proof}
    Let $k = |S|$. Since $S$ is affinely independent over $\Qbar$, we may translate it to obtain $\{0,z_1,\ldots,z_{k-1}\}$, where $z_1, \ldots, z_{k-1}$ are linearly independent over $\Qbar$, and hence also over $K_\infty$. Hence, we may assume $S = \{0,z_1,\ldots,z_{k-1}\}$, where $z_1, \ldots, z_{k-1}$ are linearly independent over $K_\infty$. Thus, we may take the basis $B$ to be $S \setminus \{0\}$. Then the coefficients $(a_{ij})_{0 \le i < k,\, 1 \le j < k}$ are all $0$ or $1$, so we may take $C = 1$ and also $r = k-1$. Taking $R = rR_0$ in Proposition~\ref{prop:subseq} gives
    \begin{equation*}
        \delta_{S} = \frac{\mu + r \log \left(1-\frac{1}{rR_0}\right)}{
        r \log(2rR_0Y+1)} \ge \frac{\mu + \log(1 - 1/R_0)}{r \log(2rR_0Y+1)} \ge \frac{\delta}{|S| \log |S|}
    \end{equation*}
    for some absolute constant $\delta > 0$.
\end{proof}

\begin{proof}[Proof of Theorem~\ref{thm:main}]
    Recall the definition of $\eta(R)$ from \eqref{eqn:eta}. Note that $\eta(R) \rightarrow \mu$ as $R \rightarrow \infty$. Thus, one may choose $R$ large enough so that $\eta(R) > 0$. The main assertion of the theorem then follows from Proposition~\ref{prop:subseq} with such a choice of $R$. The two additional assertions follow from Lemmas~\ref{lem:large} and \ref{lem:affine}, in order.
\end{proof}

\section{Examples}
\label{sec:examples}

In this section, we compute the exponent $\delta_S$ for some specific choices of set $S$. Recall the constants $Y$, $\mu$, and $R_0$ from Section~\ref{sec:prelims}. Suppose $S = \{z_1, \dots, z_k\}$. Let $\{\omega_1, \dots, \omega_r\}$ be a basis of $\mathrm{Span}_{K_\infty}(S)$ such that $S$ admits a presentation with
\begin{equation*}
    z_i = \sum_{j=1}^r a_{ij} \omega_j
\end{equation*}
for all $i \in [k]$, where $\{a_{ij} \colon i \in [k], j \in [r]\} \subset \mathcal{O}_{K_\infty}$. Since the coefficient set is finite, it must be contained in $\mathcal{O}_{K_{m_0}}$ for some $m_0 \in \Z_{\ge 0}$. Suppose $C > 0$ is such that
\begin{equation*}
    \max_{\tau \colon K_{m_0} \hookrightarrow \C} \max_{i \in [k],\, j \in [r]} |\tau(a_{ij})| \le C.
\end{equation*}
Then for every $R > C$ satisfying
\begin{equation*}
    \mu + r \log \left(1-\frac{C}{R}\right) > 0,
\end{equation*}
the proof of Proposition~\ref{prop:subseq} gives the exponent
\begin{equation}
\label{eqn:delta}
    \delta_S = \frac{\mu + r \log \left(1-\frac{C}{R}\right)}{r \log(2RY+1)}.
\end{equation}
With the above explicit formula for $\delta_S$, we are ready to compute it for a few specific examples.

\begin{example}[The unit segment]
\label{ex:unit}
    Let $S_2 = \{0, 1\}$. Then $S_2$ has a one-dimensional presentation with basis $\{1\}$ and coefficients $0$ and $1$. Thus, $r = 1$. Further, we may take $C = 1$ since $\tau(0) = 0$ and $\tau(1) = 1$ for all embeddings $\tau \colon K_{m_0} \hookrightarrow \C$. Using Sawin's choice $R = R_0$ in \eqref{eqn:delta} gives
    \begin{equation*}
        \delta_{S_2} = \frac{\mu+\log(1-1/R_0)}{\log(2 R_0 Y+1)} \ge \delta_0.
    \end{equation*}
    Thus, the construction recovers Sawin's unit-distance exponent for $n$ in a subsequence of $\N$, and the block argument gives an exponent at least $\delta_0/2$ for all sufficiently large $n$.
\end{example}

\begin{example}[Cyclotomic triangles]
\label{ex:cyclotomic}
    For integer $k \ge 2$, let $T_k = \{0, 1, \zeta_k\}$, where $\zeta_k = e^{2 \pi i/k}$. If $\zeta_k \in K_\infty$, then $T_k$ has a one-dimensional presentation with basis $\{1\}$ and coefficients $0$, $1$, and $\zeta_k$. These are algebraic integers, and all their conjugates have modulus at most $1$, so $r = 1$ and $C = 1$. Hence, the same computation as in Example~\ref{ex:unit} gives $\delta_{T_k} \ge \delta_0$. Note that $\zeta_2 = -1$, $\zeta_4 = i$, and $\zeta_6 = (1 + i\sqrt{3})/2$ all lie inside $K_\infty$ since $i, \sqrt{3} \in E(i) \subseteq K_0 \subset K_\infty$. Therefore, $\delta_{T_k} \ge \delta_0$ for $k = 2$, $4$, and $6$. These correspond to a three-term arithmetic progression, a right-angled isosceles triangle, and an equilateral triangle, respectively.
    
    On the other hand, if $\zeta_k \notin K_\infty$, then $1$ and $\zeta_k$ are linearly independent over $K_\infty$. Thus, $T_k$ has a two-dimensional presentation with basis $\{1, \zeta_k\}$ and coefficients $0$ and $1$, so $r = 2$ and $C = 1$. Setting $R = 2 R_0$ in \eqref{eqn:delta} gives
    \begin{equation*}
        \delta_{T_k} \ge \frac{\mu + 2 \log \left(1-\frac{1}{2R_0}\right)}{2\log(4 R_0Y+1)} \ge \delta_1 \colonequals 0.007039.
    \end{equation*}
    Consequently, every cyclotomic triangle satisfies the uniform bound $\delta_{T_k} \ge \min \{\delta_0, \delta_1\} = \delta_1$, which is independent of $k$.
\end{example}

\begin{example}[Arithmetic progressions]
    For $k \in \N$, let $S_k = \{0, 1, \ldots, k\}$. Then $S_k$ has a one-dimensional presentation with basis $\{1\}$ and coefficients $0, 1, \ldots, k$. Hence, $r = 1$ and one may take $C = k$. Setting $R = kR_0$ in \eqref{eqn:delta} gives
    \begin{equation*}
        \delta_{S_k} \ge \frac{\mu + \log (1 - 1/R_0)}{\log(2k R_0 Y + 1)} \ge \frac{\delta}{\log (k+1)}
    \end{equation*}
    for some absolute constant $\delta > 0$. Note that the special cases $k = 1$ and $k = 2$ correspond to the unit segment and a three-term arithmetic progression, respectively. As computed in Examples~\ref{ex:unit} and \ref{ex:cyclotomic}, these cases enjoy the bound $\delta_{S_k} \ge \delta_0$.
\end{example}

\begin{example}[Regular polygons]
    For integer $k \ge 3$, let $R_k = \{1, \zeta_k, \ldots, \zeta_k^{k-1}\}$. If $\zeta_k \in K_\infty$, then $R_k$ has a one-dimensional presentation with basis $\{1\}$ and coefficients
    \begin{equation*}
        1, \zeta_k, \ldots, \zeta_k^{k-1}.
    \end{equation*}
    All conjugates of these coefficients have absolute value equal to $1$, so $r = 1$ and one may take $C = 1$. Hence, the same computation as in Example~\ref{ex:unit} gives $\delta_{R_k} \ge \delta_0$. The case $k = 3$ belongs to this category since $\zeta_3 = (-1+i\sqrt{3})/2 \in K_\infty$. It corresponds to an equilateral triangle, which we already studied in Example~\ref{ex:cyclotomic}.
    
    On the other hand, if $\zeta_k \notin K_\infty$, then letting $P(x)$ be the minimal polynomial of $\zeta_k$ over $K_\infty$ and $s = \mathrm{deg}(P)$, we have $2 \le s < k$ and $R_k$ has a presentation with basis $\{1, \zeta_k, \ldots, \zeta_k^{s-1}\}$. Suppose that $m_0 \in \Z_{\ge 0}$ is such that $K_{m_0}$ contains all the coefficients from the presentation. Then $K_{m_0}$ must contain the coefficients of $P(x)$ since they correspond to the coefficients that appear while expressing $\zeta_k^s$ as a linear combination of the basis elements. We will now show that the presentation coefficients are indeed algebraic integers. For $0 \le j < k$, let $Q_j(x)$ be the remainder of $x^j$ modulo $P(x)$, with $\deg(Q_j) < s$. Then $\zeta_k^j = Q_j(\zeta_k)$. Since $P$ divides $x^k-1$ over $K_{m_0}$, all roots of $P$ are $k$th roots of unity. Hence the coefficients of $P$ are algebraic integers in $K_{m_0}$. Since $P$ is monic, the remainders $Q_j(x)$ of $x^j$ modulo $P(x)$ all lie in $\mathcal O_{K_{m_0}}[x]$. Since the coefficients of $Q_j$ are precisely the coefficients in the presentation of $R_k$ with the chosen basis, the above argument implies that the presentation coefficients are all algebraic integers.
    
    Now let $\tau \colon K_{m_0} \hookrightarrow \C$ be any embedding. Then the roots of $\tau(P)$ are $s$ distinct $k$th roots of unity. Let $\Theta$ be the set of these roots. The polynomial $Q_j$ satisfies
    \begin{equation*}
        \tau(Q_j)(\theta) = \theta^j.
    \end{equation*}
    By Lagrange's interpolation formula, we obtain
    \begin{equation*}
        \tau(Q_j)(x) = \sum_{\theta \in \Theta}
        \theta^j \prod_{\substack{\eta\in\Theta\\ \eta\ne\theta}} \frac{x-\eta}{\theta-\eta}.
    \end{equation*}
    The numerator of each Lagrange basis polynomial has all coefficients bounded in absolute value by $2^{s-1}$. Further, for any $\theta \in \Theta$, we have
    \begin{equation*}
        \prod_{\substack{\eta\in\Theta\\ \eta\ne\theta}} |\theta-\eta| \ge \prod_{i=1}^{\left\lfloor \frac{s-1}{2} \right\rfloor}\frac{4i}{k} \prod_{i=1}^{\left\lceil \frac{s-1}{2} \right\rceil}\frac{4i}{k} \ge \left(\frac{2(s-1)}{ek}\right)^{s-1}.
    \end{equation*}
    Therefore, every coefficient of $\tau(Q_j)$ is
    bounded in absolute value by
    \begin{equation*}
        s\,2^{s-1} \left(\frac{ek}{2(s-1)}\right)^{s-1} = s \left(\frac{ek}{s-1}\right)^{s-1}.
    \end{equation*}
    Thus, $r = s$ and we may take $C$ equal to the above expression. Setting $R = s R_0 C$ in \eqref{eqn:delta} gives
    \begin{equation*}
        \delta_{R_k} \ge \frac{\mu + s \log \left(1 - \frac{1}{sR_0}\right)}{s\log(2s R_0 C Y+1)} \ge \frac{\mu + \log \left(1 - \frac{1}{R_0}\right)}{s\log(2s R_0 C Y+1)} \ge \frac{\delta'}{s^2 + s^2 \log (k/s)} \ge \frac{\delta}{k^2}
    \end{equation*}
    for some absolute constants $\delta, \delta' > 0$.
\end{example}

\section*{Acknowledgments}

The authors are grateful to Will Sawin for helpful comments. RG is supported by a joint Clarendon Fund and Exeter College SKP scholarship.

\bibliographystyle{amsplain}
\bibliography{bibliography}

\end{document}